\documentclass[11pt]{amsart}

\usepackage[T1]{fontenc}
\usepackage[utf8]{inputenc}
\usepackage{lmodern}
\usepackage{amsmath,amssymb,mathtools}
\usepackage{microtype}
\usepackage[colorlinks=true,
            linkcolor=blue,
            citecolor=blue,
            urlcolor=blue]{hyperref}

\newtheorem{theorem}{Theorem}[section]
\newtheorem{proposition}[theorem]{Proposition}
\newtheorem{lemma}[theorem]{Lemma}
\newtheorem{corollary}[theorem]{Corollary}
\theoremstyle{definition}
\newtheorem{definition}[theorem]{Definition}
\theoremstyle{remark}
\newtheorem{remark}[theorem]{Remark}

\newcommand{\N}{\mathbb N}
\newcommand{\Id}{\operatorname{Id}}

\newcommand{\oplusinf}{\mathbin{\oplus_{\infty}}}
\newcommand{\Ninf}{\mathbb N_{\infty}}
\newcommand{\rnorm}[1]{\lVert #1\rVert_{\mathrm r}}

\title[A positive version of Miljutin's theorem]
      {A positive version of Miljutin's theorem}

\date{\today}

\thanks{The first two authors were supported by Fundaci\'{o}n S\'{e}neca - ACyT Regi\'{o}n de Murcia project 21955/PI/22 and by Agencia Estatal de Investigación (Government of Spain) and ERDF project PID2021-122126NB-C32. Laguna-Ricart was supported by the University of Murcia through its Predoctoral Contracts Programme under the Research Promotion Plan (“Programa de Contratos Predoctorales del Plan de Fomento de la Investigación de la Universidad de Murcia”). Research of the fourth author has been partially funded by projects PID2023-146505NB-C21 and PID2024-162214NB-100 funded by MICIU/AEI/ 10.13039/501100011033.}

\subjclass[2020]{Primary 46B03, 46B42; Secondary 46E15, 47B65}
\keywords{$C(K)$ space, Miljutin theorem, positive operator, positive
isomorphism, regular operator, regular averaging operator, decomposition
method, Dugundji compact, compact group}

\author[Laguna-Ricart]{Javier Laguna-Ricart}
\address{Universidad de Murcia, Departamento de Matemáticas\\
Campus de Espinardo, \\
30100 Murcia, Spain.}
\email{javier.l.r@um.es}

\author[Martínez-Cervantes]{Gonzalo Martínez-Cervantes}
\address{Universidad de Murcia, Departamento de Matemáticas\\
Campus de Espinardo,
30100 Murcia, Spain.}
\email{gonzalo.martinez2@um.es}

\author[Rondo\v s]{\\ Jakub Rondo\v s}
\address{Department of Mathematics, Faculty of Electrical Engineering, Czech Technical
University in Prague, Technicka 2, 166 27 Prague 6, Czechia.}
\email{jakub.rondos@gmail.com}

\author[Salguero-Alarc\'on]{Alberto Salguero-Alarc\'on}
\address{Departamento de An\'alisis Matem\'atico y Matem\'atica Aplicada, 
Universidad Complutense de Madrid, 
Plaza de las Ciencias 3, 
28040 Madrid, Spain.}
\email{albsalgu@ucm.es}

\begin{document}

\begin{abstract}
In this paper we provide a positive version of Miljutin's theorem. Namely, we
show that for any uncountable compact metric spaces $K$ and $L$ there exists a
positive isomorphism $T:C(K)\longrightarrow C(L)$.
Moreover, the inverse of $T$ is a regular operator, and $T$ may be chosen so
that $\rnorm{T} \cdot \rnorm{T^{-1}}\leq 9+6\sqrt3$.
The method also gives positive versions of classical nonmetrizable results for
Miljutin and Dugundji compacta. In particular, if $G$ and $H$ are infinite
compact groups of the same weight, then $C(G)$ and $C(H)$ are positively and
regularly isomorphic with the same uniform quantitative estimate.
\end{abstract}

\maketitle

\section{Introduction}

% All Banach lattices in this paper are real. For a compact Hausdorff space
% $K$, the symbol $C(K)$ denotes the Banach lattice of real-valued continuous
% functions on $K$, with the supremum norm and the pointwise order.

% We use the following one-sided
% terminology: a \emph{positive isomorphism} between Banach lattices is a
% bounded linear bijection $T$ such that $T\geq 0$. Its inverse is therefore not required to be positive. Indeed, if both $T$ and $T^{-1}$ were positive,
% then $T$ would be an order isomorphism. Kaplansky's theorem would then imply
% that the underlying compact spaces are homeomorphic
% \cite{Kaplansky1947}.

\subsection{Purpose}
Miljutin's classical theorem states that whenever $K$ and $L$ are uncountable compact metrizable spaces, their spaces of continuous functions $C(K)$ and $C(L)$ are linearly
isomorphic
\cite{Miljutin1966,Pelczynski1968}.
The positive analogue of Miljutin's theorem is concerned instead with the existence of a \emph{positive  isomorphism} $T: C(K) \to C(L)$, that is, a linear isomorphism such that $Tf\geq 0$ whenever $f\geq 0$. This setting has been considered explicitly by C\'uth,
Havelka, Rondo\v{s}, and Sar{\i}
\cite[Questions~0.1 and~5.2]{CuthHavelkaRondosSari2026}. In particular,
Question~5.2 asks about the existence of positive isomorphisms in both directions between
$C[0,1]$ and $C(2^{\N})$. 

\par The usual proof of Miljutin's theorem first obtains regular extensions and averaging operators and then invokes
Pe{\l}czy{\'n}ski's decomposition method. While the first step may suit our purposes concerning positive isomorphisms, the latter step does not directly
preserve positivity: if $P$ is a positive projection, the complementary
projection $\Id-P$ need not be.
Our argument bypasses this difficulty by considering a new pair of elementary
positive isomorphisms which are however not inverses of one another. Additionally, their inverses are \emph{regular} operators, that is to say, they can be written as the difference of two positive operators. This is actually equivalent to be dominated by a positive operator, or that the regular norm
\[
\rnorm{S}
 =\inf\bigl\{\lVert R\rVert:R\geq0,\ -R\leq S\leq R\bigr\}
\]
is finite. 

% Recall that an operator between Banach lattices is \emph{regular} if it is the
% difference of two positive operators. We use the regular norm
% \[
% \rnorm{S}
%  =\inf\bigl\{\lVert R\rVert:R\geq0,\ -R\leq S\leq R\bigr\};
% \]
% see, for example, \cite{AliprantisBurkinshaw2006}. Thus
% $\rnorm{S}=\lVert S\rVert$ when $S\geq0$. 

Our main result is the following:

\begin{theorem}\label{thm:main-intro}
Let $K$ and $L$ be uncountable compact metrizable spaces. Then there is a
positive isomorphism 
$T:C(K)\longrightarrow C(L)$.
Moreover, $T^{-1}$ is regular, and in fact $T$ may be chosen so that
\[
\|T\|\leq \sqrt{9+6\sqrt3}
\qquad\text{and}\qquad
\rnorm{T^{-1}}\leq \sqrt{9+6\sqrt3}.
\]
\end{theorem}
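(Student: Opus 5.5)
The plan is to reduce to the Cantor set $\Delta=2^{\N}$ and build $T$ as a finite composition of elementary operators, each positive with a regular inverse. Both properties survive composition: a composite of positive maps is positive, and the inverse of the composite is the composite of the regular inverses. Regular norms are submultiplicative and $\rnorm{S}=\lVert S\rVert$ for $S\ge 0$, so the norms multiply. It therefore suffices to prove the following two claims, with constants whose product is at most $\sqrt{9+6\sqrt3}$ in each direction.

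\emph{(A)} For every uncountable compact metric $K$ there is a positive $T_K:C(K)\to C(\Delta)$ with regular inverse. This gives $T=T_L^{-1}\circ T_K$ on the level of isomorphisms, but $T_L^{-1}$ is only regular, so (A) alone is not enough.

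\emph{(B)} The reverse direction: a positive map $C(\Delta)\to C(L)$ with regular inverse.

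Composing a map from (A) for $K$ with a map from (B) for $L$ then gives $T$.

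The classical inputs, which I will take from the proof of Miljutin's theorem, are these:
\begin{itemize}
\item $K$ contains a copy $\Delta'$ of $\Delta$, and there is a regular extension operator $E:C(\Delta')\to C(K)$, positive with norm $1$, satisfying $R E=\Id$, where $R$ is restriction.
\item There is a continuous surjection $\varphi:\Delta\to K$ with a regular averaging operator $P:C(\Delta)\to C(K)$, positive with norm $1$, satisfying $P(f\circ\varphi)=f$.
\end{itemize}
From these, $C(K)=E\,C(\Delta')\oplus\ker R$ and $C(\Delta)=C(K)\circ\varphi\oplus\ker P$. Here the projections $ER$ and $\varphi^{*}P$ are positive, but their complements $\Id-ER$ and $\Id-\varphi^{*}P$ are only regular, with regular norm at most $2$.

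To avoid needing positive complementary projections, I will use unitriangular maps. On a sum $X\oplus Y$ with a positive operator $A:Y\to X$, the map $(x,y)\mapsto(x+Ay,y)$ is positive, and its inverse $(x,y)\mapsto(x-Ay,y)$ is regular. I will also use a \emph{non-inverse pair}. For example, $f\mapsto (Rf,\,f)$ lands in $C(\Delta')\oplus C(K)$. The idea is to rebalance it with a parameter $t>0$, for instance $f\mapsto(tRf,\,f+ERf)$-type expressions. The forward map is then positive, and the inverse is an explicit combination of $E$, $R$ and $\Id$, hence regular.

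The target spaces in these decompositions are then identified positively using $\Delta\cong\Delta\sqcup\Delta$, so $C(\Delta)\cong C(\Delta)\oplus C(\Delta)$ by a lattice isometry. I will also use $C(\Delta)\cong C(\Delta\times\Delta)$, which lets me absorb a whole sequence of copies. This replaces Pe{\l}czy\'nski's ``$X\cong X\oplus X$'' step with lattice isometries, so no loss occurs there.

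Concretely, I will apply the decomposition trick twice:
\begin{itemize}
\item once to absorb $C(K)$ into $C(\Delta)$, via $\varphi$ and $P$;
\item once to absorb $C(\Delta)$ into $C(K)$, via $\Delta'$ and $E$.
\end{itemize}
Each application yields a positive operator of norm of the form $a(t)$ whose inverse has regular norm $b(t)$. I expect $a(t)b(t)$ to be minimized by solving a quadratic, which is where $\sqrt3$ should enter. Balancing the two factors then produces the stated bound $\sqrt{9+6\sqrt3}$ for each of $\lVert T\rVert$ and $\rnorm{T^{-1}}$.

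The main obstacle is the Pe{\l}czy\'nski step itself. One must show that the positive triangular operator is actually \emph{onto}, and not merely an isomorphic embedding with regular left inverse. The classical trick uses the complementary pieces $\ker R$ and $\ker P$, which are not sublattices and admit no positive projection onto them. My first attempt will be to avoid naming these kernels altogether. I will write everything in terms of $C(K)\oplus C(\Delta)$ and shift maps on the $c_0$-sum $(C(\Delta)\oplus C(\Delta)\oplus\cdots)$, realized as $C$ of a compactification of $\N\times\Delta$ homeomorphic to $\Delta$. I will then verify surjectivity by exhibiting the regular inverse explicitly as a finite expression in $E,R,P,\varphi^{*}$ and the shifts, checking both compositions equal the identity directly.
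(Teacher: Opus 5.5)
The decisive step is missing, and the concrete devices you name cannot supply it. A unitriangular map $(x,y)\mapsto(x+Ay,y)$ is an automorphism of a fixed sum $X\oplus Y$, so it never absorbs a summand, and absorption is what any Pe{\l}czy\'nski-type argument needs. Your rebalanced example is not even onto. Using $RE=\Id$, finite expressions in $E,R,\Id$ from $C(K)$ to $C(\Delta')\oplus C(K)$ reduce to $f\mapsto(\alpha Rf,\ \beta f+\gamma ERf)$. The range of such a map satisfies $\alpha Rh=(\beta+\gamma)g$, so it is a proper closed subspace; for $(tRf,\,f+ERf)$ the relation is $tRh=2g$.

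What is needed is an absorbing isomorphism on a sequence space of the form ``identity plus a positive operator''. Such a map lifts through a positive retraction without ever using $\Id-JQ$ in the forward direction. This is the paper's key device: $\Phi^a_E:c(E)\to c(E)\oplusinf E$, $z\mapsto\bigl((z_n+az_{n+1})_n,\,z_1\bigr)$ with $a>1$, together with its companion $\Psi^{b,c}_E$. Here $\phi^a_E-\Id$ is $a$ times the backward shift. The inverse of $\Phi^a_E$ is an alternating geometric series, convergent since $a>1$, and it is dominated by the same series with positive coefficients. Given positive $J:c(E)\to Z$ and $Q:Z\to c(E)$ with $QJ=\Id$, the map $z\mapsto\bigl(z+J(\phi^a_E-\Id)Qz,\ \rho_EQz\bigr)$ is then positive. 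It is an isomorphism $Z\to Z\oplusinf E$ because it is conjugate to $\Id_{\ker Q}\oplusinf\Phi^a_E$. The operator $\Id-JQ$ only enters the inverse, where regularity suffices. You explicitly defer this step (``my first attempt will be\dots''), so as it stands the proposal does not prove even the qualitative statement.

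The quantitative claim is also unsupported, and your architecture works against it. Factoring $T=S_L\circ T_K$ through $C(\Delta)$ gives only
$\|T\|\,\rnorm{T^{-1}}\leq\bigl(\|T_K\|\,\rnorm{T_K^{-1}}\bigr)\bigl(\|S_L\|\,\rnorm{S_L^{-1}}\bigr)$.
Each factor is itself a full Cantor--Bernstein argument with two nontrivial absorptions. Neither absorption can be skipped in general: for $K=[0,1]$, Kaplansky's theorem shows that neither $C(K)\oplusinf C(\Delta)\cong C(K)$ nor $C(\Delta)\oplusinf C(K)\cong C(\Delta)$ holds as lattices. The number $9+6\sqrt3$ is what the paper obtains for a \emph{single} such pair $\mathcal A,\mathcal B$, after optimizing $C_+(a,b,c)\,C_-(a,b,c)$ over three parameters. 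Composing two stages naturally leads to a bound of order $(9+6\sqrt3)^2$, and ``I expect a quadratic'' is not a derivation.

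The detour through $\Delta$ is also unnecessary. Miljutin's ingredients show directly that $c(C(L))\cong C(\Ninf\times L)$ is a positive contractive retract of $C(K)$, and vice versa. The paper therefore runs one Cantor--Bernstein step $T_0=\mathcal B\,\Sigma\,\mathcal A$ from $C(K)$ to $C(L)$ and then rescales by $\lambda=(C_-/C_+)^{1/2}$. A minor point: the $c_0$-sum of copies of $C(\Delta)$ is not a $C(K)$-space; the object realized as $C(\Ninf\times\Delta)$ is $c(C(\Delta))$.
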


\par In combination with the results from \cite{CuthHavelkaRondosSari2026}, Theorem \ref{thm:main-intro} yields a complete classification of separable $\mathcal{C}(K)$ spaces by positive isomorphisms. The constants therein are obtained by keeping free parameters in the pair of positive isomorphisms and then by a minimizing argument. It is worth mentioning that the same constant was obtained by Kania and Lewicki in
a recent quantitative decomposition theorem for Banach spaces which are
isometrically square and mutually isometric to $1$-complemented subspaces of
one another \cite{KaniaLewicki2026}. Their result improved the preceding bound
$(3+\sqrt2)^2$ of Korpalski and Plebanek for the distance between
$L_\infty[0,1]$ and $\ell_\infty$ \cite{KorpalskiPlebanek2025}. Our hypotheses
and construction are different, but the optimization
produces the same numerical constant.

\subsection{Content}  Section \ref{sec:retraction} contains the basic notion of \emph{positive retract} (or positively complemented subspace) and some easy consequences. Section \ref{sec:CB} contains the main ingredients for the proof of Theorem \ref{thm:main-intro}: a pair of positive isomorphisms with regular inverse and further properties concerning positivity, and the existence of positive isomorphisms that enable us to apply a positive analogue of Pe\l czy\'nski's decomposition method. The proof of Theorem \ref{thm:main-intro} appears in Section \ref{subsec:proof}.
\par Finally, in Section \ref{sec:non-metrizable} we observe that the main ingredient of the proof of our main result can survive outside metrizable compacta. Precisely, combining our argument with
the work of Haydon on these compacta \cite{Haydon1974,Haydon1976} and with
classical results on dyadic compacta and compact groups yields further positive
versions of Pe{\l}czy{\'n}ski's isomorphic classification results. As a consequence we prove, for instance, that if $K$ and $L$ are infinite compact groups of equal weight, then there exists a linear isomorphism $T: C(K) \to C(L)$ such that $T$ and $T^{-1}$ are regular.

\subsection{Preliminaries} 
All Banach lattices in this paper are real. We remark that the inverse of a positive isomorphism $T: X \to Y$ between Banach lattices is not required to be positive; in fact, if both $T$ and $T^{-1}$ were positive,
then $T$ would be a lattice isomorphism. This would imply, in the case $X=C(K)$ and $Y=C(L)$, that the underlying compact spaces $K$ and $L$ are homeomorphic by Kaplansky's theorem
\cite{Kaplansky1947}. Also, recall that every norm-one operator $T: C(K) \to C(L)$  such that $T1 = 1$ is automatically positive, since for every non-negative $f$ in the unit ball of $C(K)$ we have $\|T(1-f)\| = \|1 - Tf\| \leq 1$,
and therefore $Tf\geq 0$. 
\par On the other hand, we call a linear isomorphism $T$ between Banach lattices \emph{regular} if both $T$
and $T^{-1}$ are regular operators, and call two Banach lattices
\emph{regularly isomorphic} if such an isomorphism exists. 
We depart from the ``one-sided terminology'' of positive operators in order to be consistent with the literature; see, for example,
\cite{Flores2002}.

\section{Positive retractions} \label{sec:retraction}

\begin{definition}\label{def:positive-retract}
Let $X$ and $Y$ be Banach lattices. We call $X$ a \emph{positive retract} of
$Y$ if there are positive operators $J:X\longrightarrow Y$ and $Q:Y\longrightarrow X$
such that $QJ=\Id_X$. If $J$ and $Q$ are contractions, we call $X$ a
\emph{positive contractive retract} of $Y$.
\end{definition}

\begin{remark}
\label{rem:equivalence_positive_retract}
The contractive version of Definition~\ref{def:positive-retract} is
equivalent to a more standard formulation in terms of complemented subspaces \cite{AbramovichAliprantisPolyrakis1994,Tsekrekos1982}, as we now indicate. We will however retain the retraction terminology because we deem it more suitable for the proofs below. 
\par If $J$ and $Q$ are positive
contractions and $QJ=\Id_X$, then
\[
 \lVert x\rVert=\lVert QJx\rVert\leq\lVert Jx\rVert\leq\lVert x\rVert,
\]
so $J$ is a positive isometry, and $P=JQ$ is a positive contractive
projection onto $J(X)$. Conversely, suppose that $J:X\to Y$ is a positive
isometric embedding and that $P:Y\to J(X)$ is a positive contractive
projection. By \cite[Theorem~4]{Gutman2025}, $J^{-1}$ is positive; hence
$Q=J^{-1}P$ is positive and contractive and $QJ=\Id_X$. Thus $X$ is a
positive contractive retract of $Y$ if and only if it embeds positively and
isometrically onto a positively $1$-complemented subspace of $Y$.
The word \emph{subspace} is intentional: the range of a positive projection
need not be a sublattice of the ambient lattice.
\end{remark}

\begin{proposition}\label{prop:positive-retract}
Let $K$ be an uncountable compact metrizable space and let $L$ be a nonempty
compact metrizable space. Then $C(L)$ is a positive contractive retract of
$C(K)$.
\end{proposition}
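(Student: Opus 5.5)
We build the two positive contractions $J:C(L)\to C(K)$ and $Q:C(K)\to C(L)$ in two stages, passing through the Cantor set $\Delta=2^{\N}$. The classical facts we use are these. First, every uncountable compact metrizable space $K$ contains a homeomorphic copy of $\Delta$. Second, every nonempty compact metrizable space $L$ is a continuous image of $\Delta$ (Alexandroff--Hausdorff). Third, Miljutin's averaging operators exist. Positive contractive retracts compose: if $QJ=\Id$ and $Q'J'=\Id$ with all four maps positive contractions, then $(QQ')(J'J)=\Id$ and $QQ'$, $J'J$ are again positive contractions. So it suffices to prove two claims. First, $C(\Delta)$ is a positive contractive retract of $C(K)$. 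Second, $C(L)$ is a positive contractive retract of $C(\Delta)$.

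For the first claim, fix a closed copy $\Delta\subseteq K$. Let $Q:C(K)\to C(\Delta)$ be the restriction $f\mapsto f|_\Delta$, which is a positive contraction. For $J$ we need a positive norm-one linear extension operator $C(\Delta)\to C(K)$. We can use Borsuk--Dugundji's regular extension operator, which exists for any closed subset of a metrizable space. Alternatively, and more elementarily for $\Delta$, we can use a continuous retraction. Because $\Delta$ is zero-dimensional and $K$ is compact metrizable, $\Delta$ is a retract of $K$ via some $r$ only when, for instance, $K$ is itself zero-dimensional, so in general we rely on the Dugundji extension. This operator has the form $Ef(x)=\sum_i \varphi_i(x) f(a_i)$ off $\Delta$, where $(\varphi_i)$ is a partition of unity, and $Ef=f$ on $\Delta$. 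It is linear, positive, satisfies $E1=1$, and has norm one. Then $QE=\Id_{C(\Delta)}$.

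For the second claim, let $\phi:\Delta\to L$ be a continuous surjection. Take $J=\phi^\circ:C(L)\to C(\Delta)$, $g\mapsto g\circ\phi$, which is a positive isometry. For $Q$ we need a regular averaging operator for $\phi$: a positive operator $A:C(\Delta)\to C(L)$ with $A1=1$ and $A(g\circ\phi)=g$. Such an $A$ has norm one. A general surjection need not admit one. Miljutin's lemma, however, supplies a specific continuous surjection $\phi:\Delta\to L$ together with a regular averaging operator. The standard route takes a surjection $\Delta\to[0,1]$, or in general $\Delta\to[0,1]^{\N}\supseteq L$, that admits a weak$^*$-continuous assignment $y\mapsto\mu_y$ of probability measures supported on the fibres $\phi^{-1}(y)$. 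One then sets $Af(y)=\int f\,d\mu_y$. Positivity, $A1=1$ and $A\phi^\circ=\Id$ are immediate from this formula.

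The main obstacle is justifying this averaging operator for arbitrary compact metrizable $L$. Here I would cite Miljutin's theorem (see also Pe{\l}czy\'nski): for every compact metrizable $L$ there is a continuous surjection from $\Delta$ onto $L$ that admits a regular averaging operator. If only the version for $L=[0,1]^{\N}$ is available, we would embed $L\subseteq[0,1]^{\N}$, let $M\subseteq\Delta$ be the preimage of $L$ under the surjection $\phi:\Delta\to[0,1]^{\N}$, and restrict the fibre measures $\mu_y$ to $y\in L$. This gives the second claim with $\Delta$ replaced by the compact metrizable set $M$. Finally, since $M$ is a closed subset of $\Delta$, we pass back to $C(\Delta)$. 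We use restriction $C(\Delta)\to C(M)$ together with a positive norm-one extension. For zero-dimensional $\Delta$ this extension can be realized by a continuous retraction $r:\Delta\to M$, so $f\mapsto f\circ r$ works. This shows that $C(M)$ is a positive contractive retract of $C(\Delta)$, and composing all the retractions proves the proposition.
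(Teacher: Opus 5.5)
Your argument is correct and essentially follows the paper's: a Borsuk--Dugundji extension operator for a Cantor set $\Delta\subseteq K$ paired with restriction, a Miljutin surjection $\Delta\to[0,1]^{\N}$ with a regular averaging operator, and composition of positive contractive retractions. The only difference is how you handle $L\subseteq[0,1]^{\N}$: the paper uses a second Borsuk extension operator to make $C(L)$ a positive contractive retract of $C([0,1]^{\N})$, whereas you pull $L$ back to $M=\phi^{-1}(L)\subseteq\Delta$, restrict the fibre measures to $y\in L$, and use a continuous retraction $\Delta\to M$, which works equally well (your aside that $\Delta$ is a retract of $K$ \emph{only} when $K$ is zero-dimensional is inaccurate, but you never use it).
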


\begin{proof}
Rosenthal had already observed in \cite[Theorem~2.8]{Rosenthal2003} that most ingredients from the proof of Miljutin's theorem survive in the positive setting. Indeed, following \cite[\S 4.4]{AK16}, we have:
\begin{itemize}
    \item Borsuk's theorem \cite[Theorem 4.4.4]{AK16} actually produces positive operators. Since $L$ embeds as a closed subspace of $[0,1]^\mathbb N$ and $K$ contains a closed copy of the Cantor set $\Delta$, $C(L)$ is  a positive contractive retract of $C([0,1]^\mathbb N)$, and $C(\Delta)$ is also a positive contractive retract of $C(K)$. 
    \item Miljutin's lemma \cite[Lemma 4.4.7]{AK16} features a surjection $\psi: \Delta \to [0,1]$ and a norm-one operator $R:C(\Delta)\to C[0,1]$ such that $R(f\circ\psi)=f$ for all $f\in C[0,1]$. Such $R$ is actually denoted as $VT$ in the proof of \cite[Lemma 4.4.7]{AK16}, and it satisfies $R1 =1$, so it is positive.    
    \item The same situation happens in \cite[Theorem 4.4.8]{AK16}: there is a surjection $\tilde\psi:\Delta\to[0,1]^\mathbb N$ and a norm-one operator $\tilde R:C(\Delta)\to C([0,1]^\mathbb N)$ such that $\tilde R1=1$ and $\tilde R(f\circ\tilde\psi)=f$ for all $f\in C([0,1]^\mathbb N)$. Hence $C([0,1]^\mathbb N)$ is a positive contractive retract of $C(\Delta)$.
\end{itemize}
As a consequence of the above three items, $C(L)$ is a positive contractive retract of $C(K)$. 
\end{proof}

% \begin{remark}
% Rosenthal's formulation packages the two classical topological ingredients:
% regular extension from a Cantor subset and a regular averaging map from the
% Cantor set onto $L$. For the underlying results see
% \cite{Dugundji1951,Miljutin1966,Pelczynski1968,Ditor1970}.
% \end{remark}

For our next result, we need to consider $\Ninf=\N\cup\{\infty\}$,  the one-point compactification of the discrete space $\N$. 
Given a Banach lattice $E$, let
\[
c(E)=\bigl\{(x_n)_{n\geq 1}:x_n\text{ converges in norm in }E\bigr\},
\]
with the supremum norm and coordinatewise order. It is clear that $c(E)$ is a Banach lattice. Moreover, the following fact is well-known:

\begin{lemma}\label{lem:cC}
For every compact Hausdorff space $L$, $
c\bigl(C(L)\bigr)$ and $C(\Ninf\times L)$
are isometrically lattice isometric.
\end{lemma}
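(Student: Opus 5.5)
The natural map to use is $\Phi:C(\Ninf\times L)\to c\bigl(C(L)\bigr)$, $\Phi(F)=(F(n,\cdot))_{n\geq1}$. I will check that $\Phi$ is well defined, linear, isometric, a lattice homomorphism, and surjective. Linearity is immediate. The lattice operations are pointwise on both sides, so $\Phi(F\vee G)=\Phi(F)\vee\Phi(G)$, and $\Phi$ is in particular positive.

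\emph{Well-definedness.} Each $F(n,\cdot)$ is continuous on $L$. The key point is that $F(n,\cdot)\to F(\infty,\cdot)$ uniformly on $L$. I will obtain this from a standard compactness argument. Given $\varepsilon>0$ and $y\in L$, continuity of $F$ at $(\infty,y)$ gives $N_y$ and an open set $U_y\ni y$ such that $|F(n,z)-F(\infty,y)|<\varepsilon/2$ for all $n\geq N_y$ (including $n=\infty$) and $z\in U_y$. Cover $L$ by finitely many sets $U_{y_i}$ and take $N=\max_i N_{y_i}$. Then $|F(n,z)-F(\infty,z)|<\varepsilon$ for all $n\geq N$ and $z\in L$. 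Hence the sequence $\Phi(F)$ converges in norm to $F(\infty,\cdot)$, so $\Phi(F)\in c\bigl(C(L)\bigr)$.

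\emph{Isometry.} Since $\N\times L$ is dense in $\Ninf\times L$, we get $\|F\|_\infty=\sup_{n\in\N}\|F(n,\cdot)\|_\infty=\|\Phi(F)\|$.

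\emph{Surjectivity.} Given $(x_n)$ with $x_n\to x$ in $C(L)$, define $F(n,y)=x_n(y)$ and $F(\infty,y)=x(y)$. Continuity of $F$ at points $(n,y)$ with $n\in\N$ is clear, because $\{n\}$ is open. At $(\infty,y)$, write
\[
|F(m,z)-F(\infty,y)|\leq\|x_m-x\|_\infty+|x(z)-x(y)|.
\]
Both terms are small for $m$ large (or $m=\infty$) and $z$ close to $y$. So $F\in C(\Ninf\times L)$ and $\Phi(F)=(x_n)$.

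The only step with real content is the uniform convergence in the well-definedness argument, which is a tube-lemma style compactness argument. Everything else is routine bookkeeping. An isometric, surjective lattice homomorphism is a lattice isometry, which proves the lemma.
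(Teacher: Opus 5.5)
Your proof is correct and follows essentially the paper's argument: the same map $F\mapsto(F(n,\cdot))_{n\geq1}$, the same tube-lemma compactness argument for uniform convergence, and the same inverse construction. The differences are minor: the paper applies the compactness argument to $G=F-F(\infty,\cdot)$, which vanishes on $\{\infty\}\times L$, instead of your $\varepsilon/2$ triangle inequality, and it leaves implicit the isometry check via density of $\N\times L$ in $\Ninf\times L$ that you write out.
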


\begin{proof}
For $F\in C(\Ninf\times L)$, set
\[
\Theta F=\bigl(F(n,\cdot)\bigr)_{n=1}^\infty.
\]
We claim that $F(n,\cdot)$ converges uniformly to $F(\infty,\cdot)$ in $C(L)$. Indeed, put
\[
G(m,t)=F(m,t)-F(\infty,t)
\qquad (m\in\Ninf,\ t\in L).
\]
The function $G$ is continuous and vanishes on the compact slice
$\{\infty\}\times L$. Given $\varepsilon>0$, finitely many product
neighborhoods on which $|G|<\varepsilon$ cover that slice. Intersecting their
$\Ninf$-coordinates gives a neighborhood of $\infty$ on which
$|G(m,t)|<\varepsilon$ for every $t\in L$. Hence
$F(n,\cdot)\to F(\infty,\cdot)$ uniformly and
$\Theta F\in c(C(L))$.

Conversely, if $(f_n)$ converges uniformly in $C(L)$ to $f_\infty$, define
$F(n,t)=f_n(t)$ and $F(\infty,t)=f_\infty(t)$. Uniform convergence gives
continuity along $\{\infty\}\times L$, while continuity elsewhere is
immediate. These two assignments are inverse lattice isometries.
\end{proof}

\begin{corollary}\label{cor:sequence-retracts}
Let $K$ and $L$ be uncountable compact metrizable spaces. Then
$c(C(L))$ is a positive contractive retract of $C(K)$.
\end{corollary}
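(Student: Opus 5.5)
The plan is to reduce Corollary~\ref{cor:sequence-retracts} to Proposition~\ref{prop:positive-retract} by way of Lemma~\ref{lem:cC}.

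By Lemma~\ref{lem:cC}, $c(C(L))$ is lattice isometric to $C(\Ninf\times L)$. Let $\Phi:C(\Ninf\times L)\to c(C(L))$ be such a lattice isometry. Then $\Phi$ and $\Phi^{-1}$ are both positive contractions.

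Next I check that $\Ninf\times L$ satisfies the hypotheses of Proposition~\ref{prop:positive-retract} in the role of the space $L$ there. It is a product of two compact metrizable spaces, hence compact metrizable. It is nonempty because $L$ is uncountable, in particular nonempty.

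Applying Proposition~\ref{prop:positive-retract} with $K$ and $\Ninf\times L$, I obtain positive contractions
\[
J:C(\Ninf\times L)\to C(K),\qquad Q:C(K)\to C(\Ninf\times L)
\]
with $QJ=\Id$.

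Now set $J'=J\Phi^{-1}$ and $Q'=\Phi Q$. Each is a composition of positive contractions, so each is a positive contraction. Moreover
\[
Q'J'=\Phi QJ\Phi^{-1}=\Id_{c(C(L))}.
\]
This shows that $c(C(L))$ is a positive contractive retract of $C(K)$.

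There is no real obstacle here. The only points to check are that positive contractive retractions compose and transfer across lattice isometries, which is immediate, and that the product space is metrizable. Uncountability of $L$ is not actually needed; nonemptiness suffices.
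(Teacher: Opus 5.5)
Your proof is correct and takes the same approach as the paper: identify $c(C(L))$ with $C(\Ninf\times L)$ via Lemma~\ref{lem:cC}, apply Proposition~\ref{prop:positive-retract} to the compact metrizable space $\Ninf\times L$, and transport the retraction through the lattice isometry. You are also right that only nonemptiness of $L$ is used, not its uncountability.
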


\begin{proof}
The space $\Ninf\times L$ is an uncountable metrizable compactum. Hence we conclude by virtue of 
Proposition~\ref{prop:positive-retract} and Lemma~\ref{lem:cC}.
\end{proof}

\section{A positive Cantor-Bernstein principle} \label{sec:CB}

We begin our preparation for the proof of Theorem \ref{thm:main-intro}, which will come out as a consequence of an analogue of the Cantor-Bernstein principle for positive operators. 

\subsection{Two elementary isomorphisms}
The first proposition is the elementary core of the argument. For any Banach
lattice $E$, it is \emph{very} easy to see that the spaces $c(E)$ and $c(E)\oplusinf E$ are lattice isometric.
However, the canonical lattice isometry will not work for our purposes. We therefore consider two different positive isomorphisms between $c(E)$ and $c(E)\oplusinf E$ with free
parameters that will later be optimized. Their inverses will not be positive in general, 
but are regular with explicit positive majorants as we now show. 

\begin{proposition}\label{prop:sequence-isomorphisms}
Let $E$ be a Banach lattice. Consider $a,b>1$ and $c>0$. 

\begin{enumerate}
\item[\emph{(i)}] The operator
\[
 \Phi_E^{a}:c(E)\longrightarrow c(E)\oplusinf E,
 \qquad
 \Phi_E^{a}z=(\phi_E^a z,\rho_E z),
\]
given by 
\[ (\phi_E^a z)_n=z_n+az_{n+1}, \qquad \rho_E z=z_1, \]
is a positive isomorphism satisfying $
 \phi_E^a-\Id_{c(E)}\geq0$.
Moreover,
\[
 \lVert\Phi_E^{a}\rVert= 1+a,
 \qquad
 \rnorm{(\Phi_E^{a})^{-1}}
 \leq\max\left\{1,\frac1{a-1}\right\}.
\]

\item[\emph{(ii)}] The operator
\[
 \Psi_E^{b,c}:c(E)\oplusinf E\longrightarrow c(E),
\]
given by
\begin{equation*}
 [\Psi_E^{b,c}(z,e)]_1=z_1+ce,
 \qquad
 [\Psi_E^{b,c}(z,e)]_n=z_n+bz_{n-1}\quad(n\geq2),
\end{equation*}
is a positive isomorphism. If
$\pi_1:c(E)\oplusinf E\to c(E)$ denotes the first-coordinate projection,
then $\Psi_E^{b,c}-\pi_1\geq0$.
Moreover,
\[
 \lVert\Psi_E^{b,c}\rVert= 1+\max\{b,c\},
 \qquad
 \rnorm{(\Psi_E^{b,c})^{-1}}
 \leq\max\left\{\frac1{b-1},\frac{b}{c(b-1)}\right\}.
\]
\end{enumerate}
\end{proposition}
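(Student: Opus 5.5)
The plan is to write down both inverses explicitly, solving the defining recursions in the direction in which they contract. Each inverse is then a norm-convergent series $\sum_k \gamma_k S_k$, where the $S_k$ are positive lattice homomorphisms (shifts or coordinate evaluations) and the $\gamma_k$ are real scalars. Such an operator is dominated by $R=\sum_k|\gamma_k|S_k\geq0$, since $R\pm\sum_k\gamma_kS_k=\sum_k(|\gamma_k|\pm\gamma_k)S_k\geq0$. Hence $\rnorm{\cdot}\leq\lVert R\rVert$. The norm of $R$ on a sup-norm space is the supremum over output coordinates of $\sum_k|\gamma_k|$, because $\lVert\sum|\gamma_k|\,|x_{i_k}|\rVert\leq\sum|\gamma_k|\,\lVert x\rVert$.

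\emph{Part (i).} Positivity of $\Phi_E^a$ is clear, and $\phi_E^a-\Id=a\cdot(\text{left shift})\geq0$. For the norm, we have $\lVert z_n+az_{n+1}\rVert\leq(1+a)\lVert z\rVert$ and $\lVert z_1\rVert\leq\lVert z\rVert$. Taking $z=(x,x,\dots)$ shows that $1+a$ is attained. To invert, given $(w,e)\in c(E)\oplusinf E$ we solve $z_1=e$ and $z_{n+1}=(w_n-z_n)/a$ forward. This gives
\[
z_{n+1}=\sum_{k=1}^{n}(-1)^{n-k}a^{-(n-k+1)}w_k+(-a)^{-n}e .
\]
This is clearly the unique solution, so $\Phi_E^a$ is injective. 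Since the kernel $a^{-j}$ is summable and $w_k\to w_\infty$, a standard Toeplitz/Cauchy argument gives $z_n\to w_\infty/(1+a)$. Hence $z\in c(E)$, and $\Phi_E^a$ is onto. The coefficient sums of the majorant are $1$ for $n=1$ and, for $n\geq1$,
\[
S_n=\sum_{j=1}^{n}a^{-j}+a^{-n}=(1-a^{-n})\tfrac1{a-1}+a^{-n}\cdot1 .
\]
Each $S_n$ is a convex combination of $1/(a-1)$ and $1$. This yields $\rnorm{(\Phi_E^a)^{-1}}\leq\max\{1,1/(a-1)\}$.

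\emph{Part (ii).} Positivity of $\Psi_E^{b,c}$ and of $\Psi_E^{b,c}-\pi_1$ are immediate, since $\Psi_E^{b,c}-\pi_1$ is the sum of $b$ times the right shift and $c$ times the embedding of $e$. The norm estimate $1+\max\{b,c\}$ follows as before, and it is attained with constant $z$ and $e=x$. Forward recursion would blow up here, so we solve backward instead. Given $y\in c(E)$, we set
\[
z_{n}=\sum_{j\geq0}(-1)^j b^{-(j+1)}y_{n+1+j}\quad(n\geq1),\qquad e=\tfrac1c\,(y_1-z_1).
\]
The series converges since $b>1$ and $y$ is bounded, and again $z_n\to y_\infty/(1+b)$, so $z\in c(E)$. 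A direct check gives $z_n+bz_{n-1}=y_n$ for $n\geq2$ and $z_1+ce=y_1$, so $\Psi_E^{b,c}$ is onto. For injectivity, a homogeneous solution satisfies $z_n=(-b)^{n-1}z_1$ and $e=-z_1/c$. Boundedness with $b>1$ forces $z_1=0$, and hence $e=0$. The majorant coefficient sums are $\sum_{j\geq0}b^{-(j+1)}=1/(b-1)$ on the $c(E)$ coordinates and $\frac1c\bigl(1+\frac1{b-1}\bigr)=\frac{b}{c(b-1)}$ on the $E$ coordinate. Since the norm on $c(E)\oplusinf E$ is the maximum of the two, this gives the stated bound.

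The main obstacle is bookkeeping rather than any deep step. One must check that the infinite-series inverses actually land in $c(E)$, which is a convergence argument. One must also justify that the coefficientwise absolute-value operator is a legitimate positive majorant whose norm is the supremum of the coefficient sums. I would handle both by the uniform estimate $\sum_k|\gamma_k|\,\lVert x_{i_k}\rVert$, together with a tail argument for the limit.
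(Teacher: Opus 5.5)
Your proof is correct and follows essentially the paper's route. You use the same explicit inverses: forward recursion for $\Phi_E^a$ and the backward alternating series for $\Psi_E^{b,c}$. Your majorants, obtained by replacing each coefficient by its absolute value, are exactly the paper's $R_\Phi$ and $R_\Psi$, bounded coordinatewise in the same way. Your extra checks fill in details the paper leaves implicit: injectivity of $\Psi_E^{b,c}$ via the homogeneous recursion, attainment of the operator norms, and the convex-combination bound on the coefficient sums.
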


\begin{proof}

It is clear that the operator $\Phi_E^a: c(E) \to c(E)\oplusinf E$ defined in \emph{(i)} is positive and has norm 
$1+a$. For its inverse, observe that, given $(y,e)\in c(E)\oplusinf E$, the equation
$\Phi_E^{a}z=(y,e)$ determines $z$ recursively. Explicitly,
\begin{equation}\label{eq:Phi-inverse}
 z_n=\frac{(-1)^{n-1}}{a^{n-1}}e
     +\sum_{m=1}^{n-1}\frac{(-1)^{m-1}}{a^m}y_{n-m}.
\end{equation}
The first term tends to zero, while the sum is a convolution of the convergent sequence $(y_n)$ with the summable sequence $((-1)^{m-1}a^{-m})_{m\geq1}$, whose sum is $1/(1+a)$. Hence, if $y_n\to y_\infty$, then $z_n\to y_\infty/(1+a)$, and so this formula
defines the inverse on $c(E)\oplusinf E$. Finally, observe that the operator 
$R_\Phi:c(E)\oplusinf E\to c(E)$ defined by
\begin{equation}\label{eq:Phi-majorant}
 [R_\Phi(y,e)]_n
 =\frac{1}{a^{n-1}}e
  +\sum_{m=1}^{n-1}\frac{1}{a^m}y_{n-m}.
\end{equation}
is well-defined: its coordinates converge to $y_\infty/(a-1)$. It is positive and majorizes $(\Phi_E^{a})^{-1}$, with
\[ \lVert R_\Phi\rVert
 \leq\max\left\{1,\frac1{a-1}\right\}.
\]

\par As for \emph{(ii)}, it is also clear that the operator $\Psi_E^{b,c}: c(E) \oplusinf E \to c(E)$ defined there is positive with norm $1+\max\{b,c\}$, and it is immediate that $\Phi_E^{b,c}-\pi_1\geq 0$. Now, to obtain its inverse, pick 
$y=(y_n)\in c(E)$ and put
\begin{equation}\label{eq:Psi-inverse}
 z_n=\sum_{j=1}^{\infty}\frac{(-1)^{j-1}}{b^j}y_{n+j},
 \qquad
 e=\frac1c(y_1-z_1).
\end{equation}
By an analogous argument as in part (a), we have that if $y_n \to y_\infty$ then $z_n \to y_\infty/(1+b)$, and $z_1+ce=y_1$ and $z_n+bz_{n-1}=y_n$ for $n\geq2$.
Consequently, formula \eqref{eq:Psi-inverse} defines the inverse of $\Psi_E^{b,c}$. Now, consider the positive operator
$R_\Psi:c(E)\to c(E)\oplusinf E$ given by $R_\Psi y=(v,f)$, where
\begin{equation}\label{eq:Psi-majorant}
 v_n=\sum_{j=1}^{\infty}\frac1{b^j}y_{n+j},
 \qquad
 f=\frac1c(y_1+v_1).
\end{equation}
Then it is clear that \[
 -R_\Psi\leq(\Psi_E^{b,c})^{-1}\leq R_\Psi, \qquad 
 \lVert R_\Psi\rVert
 \leq\max\left\{\frac1{b-1},\frac{b}{c(b-1)}\right\}. \qedhere
\]
\end{proof}

\begin{remark}\label{rem:one-sided}
The inverse formulas \eqref{eq:Phi-inverse} and \eqref{eq:Psi-inverse}
contain alternating signs. Thus the construction is intrinsically one-sided:
the operators are positive, but their inverses need not be.
\end{remark}

\subsection{An absorption result for positive retracts} 
We now lift the construction of the previous section through an arbitrary positive
retraction.

\begin{theorem}[Positive absorption]\label{thm:absorption}
Let $Z$ and $E$ be Banach lattices. Suppose that $c(E)$ is a positive retract
of $Z$, and denote
\[
J:c(E)\longrightarrow Z,
\qquad
Q:Z\longrightarrow c(E)
\]
the corresponding positive bounded operators with $QJ=\Id_{c(E)}$.
For every $a,b>1$ and $c>0$ there are positive isomorphisms
\[
\mathcal A_{a}:Z\longrightarrow Z\oplusinf E,
\qquad
\mathcal B_{b,c}:Z\oplusinf E\longrightarrow Z,
\]
 whose inverses are regular. In fact, if $J$ and $Q$ are contractions, then 
\[
\begin{aligned}
 \lVert\mathcal A_{a}\rVert
 &\leq 1+a,&
 \rnorm{\mathcal A_{a}^{-1}}
 &\leq2+\max\left\{1,\frac1{a-1}\right\},\\
 \lVert\mathcal B_{b,c}\rVert
 &\leq1+\max\{b,c\},&
 \rnorm{\mathcal B_{b,c}^{-1}}
 &\leq\max\left\{2+\frac1{b-1},
                   \frac{b}{c(b-1)}\right\}.
\end{aligned}
\]
\end{theorem}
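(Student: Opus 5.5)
The plan is to transport the two elementary isomorphisms of Proposition~\ref{prop:sequence-isomorphisms} along the retraction $(J,Q)$. Throughout, $\Phi=\Phi_E^{a}$, $\phi=\phi^a_E$, $\rho=\rho_E$ and $\Psi=\Psi_E^{b,c}$. The naive recipe ``replace the copy $JQ(Z)$ of $c(E)$ by its image'' leads to $z\mapsto z-JQz+J\phi Qz$, which looks non-positive because of the term $-JQz$. The point is to regroup it as $z+J(\phi-\Id)Qz$, which is positive by the one-sided inequalities $\phi-\Id\ge0$ and $\Psi-\pi_1\ge0$ in Proposition~\ref{prop:sequence-isomorphisms}. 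So I define
\[
\mathcal A_a z=\bigl(z+J(\phi-\Id)Qz,\ \rho Qz\bigr),\qquad
\mathcal B_{b,c}(z,e)=z+J(\Psi-\pi_1)(Qz,e).
\]
Both are compositions and sums of positive operators, hence positive. When $\|J\|,\|Q\|\le1$, the norm bounds $\|\mathcal A_a\|\le 1+a$ and $\|\mathcal B_{b,c}\|\le 1+\max\{b,c\}$ follow by evaluating at $z=1$, or more generally at a positive element. For $\mathcal A_a$ one uses $|z+J(\phi-\Id)Qz|\le |z|+aJQ|z|$ coordinatewise, which gives $\le (1+a)\|z\|$. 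Similarly $\|\mathcal B_{b,c}\|\le 1+\max\{b,c\}$. I will double-check that these estimates yield $1+a$ and not $2+a$; if they do not, I will use $\|\phi-\Id\|\le a$ together with the structure of the supremum norm in $c(E)$.

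Next I invert both maps by applying $Q$ and using $QJ=\Id$. For $\mathcal A_a z=(w,e)$, applying $Q$ to the first coordinate gives $Qw=\phi Qz$, and $e=\rho Qz$. Hence $Qz=u:=\Phi^{-1}(Qw,e)$. Substituting back, and using $(\phi-\Id)u=Qw-u$, I get
\[
\mathcal A_a^{-1}(w,e)=w-JQw+J\Phi^{-1}(Qw,e).
\]
I will verify directly that this is a two-sided inverse. Likewise, if $\mathcal B_{b,c}(z,e)=w$, then $Qw=\Psi(Qz,e)$, so $(Qz,e)=\Psi^{-1}Qw=:(u,e)$. This gives
\[
\mathcal B_{b,c}^{-1}w=\bigl(w-JQw+Ju,\ e\bigr).
\]

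For regularity I write down explicit positive majorants. For $\mathcal A_a^{-1}$ I take
\[
(w,e)\mapsto |w|+JQ|w|+JR_\Phi(Q|w|,|e|),
\]
understood as the positive operator $w\mapsto w+JQw+JR_\Phi(Qw,e)$ on positive elements. It dominates $\mathcal A_a^{-1}$ in modulus, and its norm is at most $1+1+\max\{1,1/(a-1)\}$. For $\mathcal B_{b,c}^{-1}$ I take
\[
w\mapsto\bigl(w+JQw+Jv,\ f\bigr),\qquad (v,f)=R_\Psi Qw.
\]

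The main subtlety is the norm bound for $\mathcal B_{b,c}^{-1}$, because the target is an $\oplus_\infty$-sum. I must bound the two coordinates separately rather than use only $\|R_\Psi\|$. From the formulas \eqref{eq:Psi-majorant}, $\|v\|\le \frac1{b-1}\|y\|$ and $\|f\|\le \frac1c\bigl(1+\frac1{b-1}\bigr)\|y\|=\frac{b}{c(b-1)}\|y\|$. Therefore the first coordinate of the majorant has norm at most $2+\frac1{b-1}$, the second at most $\frac{b}{c(b-1)}$, and taking the maximum gives the stated bound. In the general (non-contractive) case the same formulas still give bounded positive majorants, so the inverses are regular.
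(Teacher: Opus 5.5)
Your proposal is correct and follows the paper's proof essentially step for step. You use the same operators $\mathcal A_a$ and $\mathcal B_{b,c}$ (positive thanks to $\phi_E^a-\Id\geq0$ and $\Psi_E^{b,c}-\pi_1\geq0$), the same inverse formulas, and the same positive majorants built from $R_\Phi$ and $R_\Psi$, with the $\oplusinf$-coordinates estimated separately. The only difference is that you prove invertibility by checking the explicit inverse directly, whereas the paper factors the maps through the isomorphism $U\colon\ker Q\oplusinf c(E)\to Z$, $U(w,z)=w+Jz$.

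One small correction concerns the forward norm bounds. They should come simply from the triangle inequality with $\lVert\phi_E^a-\Id\rVert\leq a$ and $\lVert\Psi_E^{b,c}-\pi_1\rVert\leq\max\{b,c\}$, which is your stated fallback. Your primary argument does not work: a general $Z$ has no unit to evaluate at, and the inequality $|z+J(\phi-\Id)Qz|\leq|z|+aJQ|z|$ fails in general, since $\phi-\Id$ is $a$ times the left shift and is not dominated by $a\Id$.
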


\begin{proof} First of all, recall that the hypothesis yield an isomorphism 
\[ U: \ker Q \oplusinf c(E) \to Z, \quad U(w,z) = w+Jz.\]

To define $\mathcal A_{a}:Z\to Z\oplusinf E$, we use the isomorphism $\Phi_E^{a}=(\phi_E^a,\rho_E)$ from
Proposition~\ref{prop:sequence-isomorphisms}. 
Consider the map
\begin{equation}\label{eq:A}
\mathcal A_{a} z
 =\left(z+J(\phi_E^a-\Id_{c(E)})Qz,\,\rho_E Qz\right), 
\end{equation}
which is positive, since $\phi_E^a - \Id_{c(E)} \geq 0$. Now, observe that 
\[ \mathcal A_{a} U(w,z) = (U(w,\phi_E^az), \rho_E z), \]
in other words, 
\[ \mathcal A_{a} = (U \oplus \Id_E) \circ(\Id_{\ker Q} \oplus \Phi_E^{a})\circ U^{-1}, \]
where the involved operators act coordinatewise as indicated by the parentheses:
\begin{align*}
   U\oplusinf  \Id_E: &\big(\!\ker Q \oplusinf  c(E)\big)\oplusinf  E \to Z \oplusinf  E, \\ \Id_{\ker Q} \oplusinf  \Phi_E^{a}: &\ker Q \oplusinf c(E) \to \ker Q \oplusinf  \big(c(E) \oplusinf  E\big).
\end{align*}
Therefore, $\mathcal A_{a}$ is a positive isomorphism.
Similarly, define $\mathcal B_{b,c}: Z \oplusinf E \to Z$ by
\begin{equation}\label{eq:B}
\mathcal B_{b,c}(z,e)
 =z+J\bigl(\Psi_E^{b,c}(Qz,e)-Qz\bigr),\
\quad z\in Z,\ e\in E, 
\end{equation}
which is positive --again by Proposition \ref{prop:sequence-isomorphisms}-- and satisfies
\[
\mathcal B_{b,c}(U(x,u),e)
 =U\bigl(x,\Psi_E^{b,c}(u,e)\bigr).
\]
In terms of operators, this means that 
\[ \mathcal B_{b,c} = U \circ(\Id_{\ker Q} \oplusinf \Psi_E^{b,c})\circ (U \oplusinf \Id_E)^{-1}, \]
and so $\mathcal B_{b,c}$ is an isomorphism. 

\par We now check regularity of $\mathcal A_{a}^{-1}$ and $\mathcal B_{b,c}^{-1}$ and obtain the corresponding estimations for their regular norms. For $(w,e)\in Z\oplusinf E$, put $u=(\Phi_E^{a})^{-1}(Qw,e)$. Then
\begin{equation*}
%\label{eq:A-inverse}
\mathcal A_{a}^{-1}(w,e)=w-JQw+Ju.
\end{equation*}
The operator $\Id_Z-JQ$ is regular and is dominated by the positive operator
$\Id_Z+JQ$. Combining this with the majorant $R_\Phi$ from
\eqref{eq:Phi-majorant}, we see that $\mathcal A_{a}^{-1}$ is dominated by
the positive operator
\[
(w,e)\longmapsto
w+JQw+J R_\Phi(Qw,e).
\]
Hence $\mathcal A_{a}^{-1}$ is regular. Likewise, for $w\in Z$, write $
(\Psi_E^{b,c})^{-1}(Qw)=(u,e)$.
Then
\begin{equation*}
%\label{eq:B-inverse}
\mathcal B_{b,c}^{-1}w=(w-JQw+Ju,e).
\end{equation*}
Using $R_\Psi$ from \eqref{eq:Psi-majorant}, the right-hand side is dominated
by the positive operator
\[
w\longmapsto\bigl(w+JQw+Jv,f\bigr),
\qquad \text{ where }
(v,f)=R_\Psi Qw.
\]
Hence $\mathcal B_{b,c}^{-1}$ is regular. Finally, assume now that $J$ and $Q$ are contractions. Since
$\lVert\phi_E^a-\Id_{c(E)}\rVert= a$ and
$\lVert\rho_E\rVert= 1$, \eqref{eq:A} gives
$
\lVert\mathcal A_{a}\rVert\leq1+a
$.
The positive majorant for $\mathcal A_{a}^{-1}$ displayed above has norm
at most
\[
2+\max\left\{1,\frac1{a-1}\right\},
\]
and hence gives the asserted estimate for
$\lVert\mathcal A_{a}^{-1}\rVert_r$. Analogously, \eqref{eq:B} gives
\[
\lVert\mathcal B_{b,c}\rVert\leq1+\max\{b,c\},
\]
while the positive majorant for $\mathcal B_{b,c}^{-1}$ displayed above has norm
at most
\[
\max\left\{2+\frac1{b-1},\frac{b}{c(b-1)}\right\},
\]
thus giving the asserted estimate for
$\lVert\mathcal B_{b,c}^{-1}\rVert_r$.
\end{proof}

% \begin{remark}\label{rem:complement}
% Recall that the projection $JQ$ onto $J(c(E))$ is positive, but the complementary
% projection $\Id_Z-JQ$ need not be. It appears in \eqref{eq:A-inverse} and
% \eqref{eq:B-inverse}, but only in the verification of invertibility. Positivity
% of the forward maps \eqref{eq:A} and \eqref{eq:B} follows instead from the
% positive perturbation properties \eqref{eq:Phi-positive} and
% \eqref{eq:Psi-positive}. This is precisely what avoids the obstruction in
% the ordinary decomposition argument.
% \end{remark}

\begin{theorem}[Positive Cantor--Bernstein principle]\label{thm:CB}
Let $X$ and $Y$ be Banach lattices. Suppose that $c(Y)$ is a positive retract
of $X$ and $c(X)$ is a positive retract of $Y$. Then there is a positive
isomorphism from $X$ onto $Y$ whose inverse is regular. 
\par Furthermore, if both retractions
are contractive, the isomorphism $T:X\to Y$ may be chosen so that 
\[
 \rnorm{T}\,\rnorm{T^{-1}}\leq9+6\sqrt3.
\]
\end{theorem}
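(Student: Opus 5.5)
The plan is to compose the two absorption isomorphisms of Theorem~\ref{thm:absorption} through the swap $X\oplusinf Y\cong Y\oplusinf X$. First apply Theorem~\ref{thm:absorption} with $Z=X$ and $E=Y$, using the positive retraction $J_1:c(Y)\to X$, $Q_1:X\to c(Y)$. This gives a positive isomorphism
\[
\mathcal A_a:X\longrightarrow X\oplusinf Y
\]
with regular inverse. Next apply it with $Z=Y$ and $E=X$, using $J_2:c(X)\to Y$, $Q_2:Y\to c(X)$. This gives a positive isomorphism
\[
\mathcal B_{b,c}:Y\oplusinf X\longrightarrow Y
\]
with regular inverse. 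Let $\sigma:X\oplusinf Y\to Y\oplusinf X$, $\sigma(x,y)=(y,x)$, which is a lattice isometry, so $\sigma\ge0$, $\sigma^{-1}\ge0$ and $\rnorm{\sigma}=\rnorm{\sigma^{-1}}=1$. I would set
\[
T=\mathcal B_{b,c}\circ\sigma\circ\mathcal A_a:X\longrightarrow Y .
\]
As a composition of positive isomorphisms, $T$ is a positive isomorphism. Its inverse is $T^{-1}=\mathcal A_a^{-1}\sigma^{-1}\mathcal B_{b,c}^{-1}$. If $|S_i|\le R_i$ with $R_i\ge0$, then $-R_1R_2\le S_1S_2\le R_1R_2$, so compositions of regular operators are regular with submultiplicative regular norm. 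Hence $T^{-1}$ is regular, which proves the qualitative part.

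For the quantitative part, assume both retractions are contractive. Submultiplicativity together with the four estimates of Theorem~\ref{thm:absorption} gives
\[
\rnorm{T}\,\rnorm{T^{-1}}\le (1+a)\Bigl(2+\max\Bigl\{1,\tfrac1{a-1}\Bigr\}\Bigr)\,(1+\max\{b,c\})\,\max\Bigl\{2+\tfrac1{b-1},\tfrac{b}{c(b-1)}\Bigr\}.
\]
I would then minimise over $a,b>1$ and $c>0$.

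The hard step is that this naive minimisation does not reach the target. The $a$-factor alone has minimum $9$ (at $a=2$). The $b,c$-factor with $c=b$ becomes $2t+5+2/t$ with $t=b-1$, also with minimum $9$. The product is therefore about $81$, far above $9+6\sqrt3\approx 19.4$. So the stated constant cannot come from multiplying the four bounds separately; one has to estimate the composite directly.

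\begin{itemize}
\item I would first write $T$ explicitly, using \eqref{eq:A} and \eqref{eq:B}, as the identity-like part plus $J_2(\cdots)Q_2$ and $J_1(\cdots)Q_1$ corrections. I would then bound $\lVert T\rVert$ directly, expecting a bound of the shape $1+a+\max\{b,c\}$ or similar rather than a product.
\item In the same way, I would build one positive majorant for $T^{-1}$ from $\Id+J_iQ_i$, $R_\Phi$ and $R_\Psi$, and bound its norm by a sum of terms rather than a product.
\item I would optimise these additive expressions in $a$, $b$, $c$; the $\sqrt3$ should come from a quadratic first-order condition.
\item If a cleaner route is needed, I would rescale, replacing $T$ by $\lambda T$ (which leaves the product of norms unchanged), so that $\lVert T\rVert$ and $\rnorm{T^{-1}}$ can be balanced to $\sqrt{9+6\sqrt3}$ each, as in Theorem~\ref{thm:main-intro}.
\end{itemize}
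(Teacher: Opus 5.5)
Your qualitative argument is correct and matches the paper: $T=\mathcal B_{b,c}\,\sigma\,\mathcal A_a$ is a positive isomorphism, and $T^{-1}$ is regular because compositions of regular operators are regular. You also correctly see that multiplying the four bounds of Theorem~\ref{thm:absorption} only gives $81$. From that point on, however, the proposal is a programme rather than a proof, and this is a genuine gap. The constant $9+6\sqrt3$ comes entirely from two specific composite estimates and an optimisation, and you derive none of them. Moreover, the shape you anticipate for $\lVert T\rVert$ is not the one that comes out.

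Here is what is missing.

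\textbf{Forward bound.} Write $\mathcal A_a x=(w,e)$ with $\lVert w\rVert\le(1+a)\lVert x\rVert$ and $\lVert e\rVert\le\lVert x\rVert$. Then $Tx=e+J_2\bigl(\Psi^{b,c}_X(Q_2e,w)-Q_2e\bigr)$, and
\[
\Psi^{b,c}_X(Q_2e,w)-Q_2e=(cw,\,b(Q_2e)_1,\,b(Q_2e)_2,\dots).
\]
The large piece $w$ sits only in the first coordinate and $Q_2e$ only in the shifted ones. Hence $\lVert T\rVert\le 1+\max\{c(1+a),b\}$: the factor $1+a$ is multiplied by $c$, not added.

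\textbf{Inverse bound.} For $y\ge0$ put $(v,f)=R_\Psi Q_2y$ and $h=y+J_2Q_2y+J_2v$. Then $(h,f)$ majorises $\mathcal B_{b,c}^{-1}y$, with
\[
\lVert h\rVert\le\tfrac{2b-1}{b-1}\lVert y\rVert,\qquad \lVert f\rVert\le\tfrac{b}{c(b-1)}\lVert y\rVert,
\]
and $T^{-1}y$ is majorised by $f+J_1Q_1f+J_1R_\Phi(Q_1f,h)$. The key point is that in \eqref{eq:Phi-majorant} the term $h$ appears only as $a^{-(n-1)}h$. The weights $a^{-(n-1)}$ and $(a-1)\sum_{m<n}a^{-m}$ add up to $1$, so
\[
\lVert R_\Phi(Q_1f,h)\rVert\le\max\{\lVert h\rVert,\lVert f\rVert/(a-1)\},
\]
a maximum rather than a sum or product. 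Together these give
\[
\lVert T\rVert\le1+\max\{c(1+a),b\},\qquad
\rnorm{T^{-1}}\le\frac{2b}{c(b-1)}+\max\Bigl\{\frac{2b-1}{b-1},\frac{b}{c(b-1)(a-1)}\Bigr\}.
\]

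\textbf{Optimisation.} Only with these bounds does the optimisation reach the target. Monotonicity in $c$ and then in $b$ lets you take $b=c(1+a)$, and then $a=b/(b-1)$. This reduces the problem to minimising $(a+1)(2a-1)^2/(a-1)$. Its critical point satisfies $2a^2-2a-1=0$, giving $a=\frac{1+\sqrt3}{2}$, $b=2+\sqrt3$, $c=1+\frac1{\sqrt3}$, and the product $(3+\sqrt3)\cdot\frac{3+3\sqrt3}{2}=9+6\sqrt3$.

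Without these estimates your proposal proves only the first assertion of the theorem.
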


\begin{proof}
Fix $a,b>1$ and $c>0$. Apply the first isomorphism in
Theorem~\ref{thm:absorption} to the positive retract $c(Y)$ of $X$, obtaining
a positive isomorphism
\[
\mathcal A_X^{a}:X\longrightarrow X\oplusinf Y.
\]
Apply the second isomorphism in Theorem~\ref{thm:absorption} to the positive
retract $c(X)$ of $Y$, obtaining a positive isomorphism
\[
\mathcal B_Y^{b,c}:Y\oplusinf X\longrightarrow Y.
\]
The coordinate flip
\[
\Sigma:X\oplusinf Y\longrightarrow Y\oplusinf X,
\qquad
\Sigma(x,y)=(y,x),
\]
is even a lattice isomorphism. Therefore
\[
T_0=\mathcal B_Y^{b,c}\, \Sigma\, \mathcal A_X^{a}:X\longrightarrow Y
\]
is a positive isomorphism with regular inverse. This proves the first
assertion.

We now suppose that all retracts
\begin{align*} J_Y: c(X) \to Y, & \quad Q_Y:Y \to c(X), \\ J_X: c(Y) \to X, & \quad Q_X: X \to c(Y),
\end{align*}
are contractions, and compute the following estimates for the regular norms of $T_0$ and its inverse:
\begin{align}
    \label{eq:CB-forward}
     \lVert T_0\rVert & \leq
 C_+(a,b,c):=1+\max\{c(1+a),b\}. \\[2mm] 
 \label{eq:CB-inverse}
 \rnorm{T_0^{-1}} & \leq C_-(a,b,c)
 :=\frac{2b}{c(b-1)}
   +\max\left\{
       \frac{2b-1}{(b-1)},
       \frac{b}{c(b-1)(a-1)}
     \right\}.
\end{align}
The estimate \eqref{eq:CB-forward} is straightforward: if $x\in X$, then 
$\mathcal A_X^{a}x=(w,e)$ with
$\lVert w\rVert\leq(1+a)\lVert x\rVert$ and
$\lVert e\rVert\leq \lVert x\rVert$, and therefore $\|\mathcal B^{b,c}_Y(e,w)\|\leq \|x\|+\max\{c(1+a), b\}\|x\|$. 
Hence we turn our attention to \eqref{eq:CB-inverse}. We first work with $(\mathcal B^{b,c}_Y)^{-1}$, using the positive majorant $R_\Psi:c(X)\to c(X)\oplusinf X$ of $(\Psi_X^{b,c})^{-1}$ defined in \eqref{eq:Psi-majorant}. Let $y\in Y_+$ and write 
\begin{align*}
 (u,e)=(\Psi_X^{b,c})^{-1}Q_Yy,
 \qquad
 w=y-J_YQ_Yy+J_Yu, \\
 (v,f) = R_\Psi Q_Y y, \qquad  h=y+J_YQ_Yy+J_Yv. 
\end{align*}
The domination of $(\Psi_X^{b,c})^{-1}$ by $R_\Psi$ gives $|u|\leq v$ and $|e|\leq f$, hence $|w|\leq h$, and by the definition of $R_\Psi$ and the contractivity of the
retraction operators,
\[
 \|v\|\leq\frac{1}{b-1}\|y\|,
 \qquad
 \|f\|\leq\frac{b}{c(b-1)}\|y\|.
\]
Therefore, 
\[
 \|h\|
 \leq \|y\|+\|Q_Yy\|+\|v\|
 \leq\frac{2b-1}{b-1}\|y\|.
\]
Now we deal with $(\mathcal A^a_X)^{-1}$, using the positive majorant $R_\Phi:c(Y)\oplusinf Y\to c(Y)$ of $(\Phi_Y^a)^{-1}$ considered in \eqref{eq:Phi-majorant}. Write
\[
 z=(\Phi_Y^{a})^{-1}(Q_Xe,w), \qquad 
 r=R_\Phi(Q_Xf,h).
\]
Since $Q_X$ is positive and $R_\Phi$ dominates $(\Phi_Y^{a})^{-1}$, we have that $|z|\leq r$. Hence \eqref{eq:Phi-majorant} and the contractivity of $Q_X$
give
 \begin{align*}
 \|r\|
 &\leq
 \max\left\{
   \|h\|,
   \frac{\|Q_Xf\|}{a-1}
 \right\} \leq 
 \max\left\{
  \frac{2b-1}{b-1},
  \frac{b}{c(b-1)(a-1)}
 \right\}\|y\|.
 \end{align*}
To conclude, observe that, with the preceding notations,
\[
 T_0^{-1}y=e-J_XQ_Xe+J_Xz.
\]
The assignments $y\mapsto f$, $y\mapsto h$, and $y\mapsto r$ define positive operators. Therefore, so does the map $R:Y\longrightarrow X$ given by
\[  Ry=f+J_XQ_Xf+J_Xr. \]
Now, since $|e|\leq f$ and $
|z|\leq r$, we obtain that $|T_0^{-1}y|\leq Ry$ for every $y\in Y_+$. Therefore, $T_0^{-1}$ is dominated by $R$, and this finally provides our estimation \eqref{eq:CB-inverse}: 
\begin{align*}
 \|Ry\|
 &\leq
 \|f\|+\|J_XQ_Xf\|+\|J_Xr\|              \leq
 2\|f\|+\|r\|                                        \\
 &\leq
 \left[
 \frac{2b}{c(b-1)}
 +\max\left\{
       \frac{2b-1}{b-1},
       \frac{b}{c(b-1)(a-1)}
      \right\}
 \right]\|y\|.
\end{align*}

\par To finish, we minimize the product $P=C_+C_-$ given by
\[ P(a,b,c) = \big(1+\max\{c(1+a), b\}\big) \left[\frac{2b}{c(b-1)}+\max\left\{\frac{2b-1}{b-1}, \frac{b}{c(b-1)(a-1)}\right\}\right].\]
We give an elementary and relatively short argument for the sake of completeness. Recall that $a,b>1$ and $c>0$. Observe that, as functions of $c$, $C_+$ is constant if $c\in (0, \frac{b}{1+a}]$ and increasing otherwise, while $C_-$ is decreasing. Hence there is a minimum of $P$ satisfying $b\leq c(1+a)$, and therefore $C_+$ can be assumed to be equal to $1+c(1+a)$. Now, since $C_+$ is constant in $b$ and $C_-$ is decreasing in $b$, we infer that there is a minimum satisfying $b=c(1+a)$. This implies that minimizing $P$ amounts to minimizing 
\[ P_1(a,b) = \frac{1+b}{b-1}\left[ 2(1+a)+\max\left\{2b-1, \frac{1+a}{a-1}\right\}\right]. \]
Let $a^*$ be given by 
\[ 2b-1 = \frac{1+a^*}{a^*-1}\]
(that is, $a^* = \frac{b}{b-1})$. The minimum cannot be attained at a point $(a,b)$ with $a>a^*$, since $P_1$ is a strictly increasing function of $a$ in $[a^*, +\infty)$. Therefore, there is a minimum with $a\leq a^*$, and this allows us to minimize $P_1$ by minimizing
\begin{align*} P_2(a,b) & = \frac{1+b}{b-1}\left[ 2(1+a)+\frac{1+a}{a-1}\right] \quad \text{ subject to } \quad a\leq \frac{b}{b-1}. \end{align*} 
But $P_2(a,b)$ is the product of a decreasing function in $b$ and a function in $a$, hence there is a minimum with $a= \frac{b}{b-1}$. Consequently, replacing $b$ as a function of $a$ in $P_2$ and  minimizing
\[ P_3(a) = \frac{(a+1)(2a-1)^2}{a-1}, \]
we arrive to
\begin{equation*}
%\label{eq:optimal-parameters}
 a=\frac{1+\sqrt3}{2},
 \qquad b=2+\sqrt3,
 \qquad c=1+\frac1{\sqrt3}, 
\end{equation*}
which provide the following values: 
\[
 C_+=3+\sqrt3,
 \qquad
 C_-=\frac{3+3\sqrt3}{2},
 \qquad
 P=C_+C_-=9+6\sqrt3. 
\]
Multiplying $T_0$ by the positive scalar
$\lambda=(C_-/C_+)^{1/2}$ gives $T=\lambda T_0$ and balances the two
estimates:
\[
 \rnorm{T},\ \rnorm{T^{-1}}
 \leq\sqrt{C_+C_-}=\sqrt{9+6\sqrt3}.\qedhere
\]
\end{proof}

\subsection{The positive Miljutin theorem} \label{subsec:proof}
With the above preparations, the proof of Theorem \ref{thm:main-intro} is simple. 
Put $X=C(K)$ and $
Y=C(L)$.
By Lemma~\ref{lem:cC}, $c(Y)$ and $C(\Ninf\times L)$ are isometrically lattice isomorphic. Since $\Ninf\times L$ is compact
metrizable, Proposition~\ref{prop:positive-retract}, applied with source
space $K$ and target space $\Ninf\times L$, shows that $c(Y)$ is a positive
contractive retract of $X$. Similarly,
\[
c(X)\cong C(\Ninf\times K),
\]
and Proposition~\ref{prop:positive-retract}, now applied with source space
$L$, shows that $c(X)$ is a positive contractive retract of $Y$.
Theorem~\ref{thm:CB} therefore gives a positive isomorphism
$T:X\to Y$ whose inverse is regular and which satisfies
\[
\rnorm{T},\ \rnorm{T^{-1}}
\leq\sqrt{9+6\sqrt3}. \qquad\qed
\]

\begin{remark}
In the notation for the one-sided positive Banach--Mazur quantity introduced
in \cite{CuthHavelkaRondosSari2026}, the proof also gives the uniform estimate
\[
d_{BM}^{+}\bigl(C(K),C(L)\bigr)\leq 9+6\sqrt3.
\]
\end{remark}

% \begin{remark}\label{rem:regular-isomorphism}
% The conclusion is stronger than a positive isomorphism in the one-sided
% sense: $T$ and $T^{-1}$ are both regular, thus $C(K)$ and $C(L)$ are regularly isomorphic in the terminology fixed in the introduction, with an isomorphism which is
% positive in the prescribed direction. The latter positivity is an additional
% asymmetric strengthening of regular isomorphism.
% \end{remark}

% \begin{remark}
% The theorem is genuinely one-sided. If both $T$ and $T^{-1}$ were positive,
% then $T$ would be an order isomorphism. Kaplansky's theorem would then imply
% that the underlying compact spaces are homeomorphic
% \cite{Kaplansky1947}. The alternating signs in the inverse formulas of
% Proposition~\ref{prop:sequence-isomorphisms} are exactly where this stronger
% requirement is avoided. Their positive majorants are, at the same time, what
% makes the inverse regular.
% \end{remark}

\section{Miljutin compacta, Dugundji compacta, and compact groups}
\label{sec:non-metrizable}

We finish with some nonmetrizable consequences of the preceding argument.
They are positive versions of results in Pe{\l}czy{\'n}ski's
classification programme \cite{Pelczynski1968}.
 If $\pi:K\to L$ is a continuous surjection between compact spaces, write
\[
\pi^*:C(L)\longrightarrow C(K),
\qquad
\pi^*f=f\circ\pi.
\]
A \emph{regular averaging operator} for $\pi$ is a positive operator
$A:C(K)\to C(L)$ satisfying $A\pi^*=\Id_{C(L)}$. Such an operator is unital
and has norm one. Similarly, a \emph{regular extension operator} for a closed
subspace $F$ of a compact space $K$ is a positive operator
$D:C(F)\to C(K)$ which is a right inverse to restriction and preserves the
constant function~$1$. This terminology agrees with the classical one: for
operators between real $C(K)$-spaces, every unital contraction is positive,
while every positive unital operator has norm one.

% The adjective “regular” in the classical expressions “regular averaging operator” and “regular extension operator” is part of the established terminology. Such operators are positive and hence regular in the Banach lattice sense, but the expressions additionally encode the relevant averaging or extension identities.

Let $\kappa$ be an infinite cardinal. A compact space $K$ of weight $\kappa$
is called a \emph{Miljutin compactum} if there is a continuous surjection $\pi:2^\kappa\longrightarrow K$
which admits a regular averaging operator. This is the terminology of
Pe{\l}czy{\'n}ski \cite{Pelczynski1968}. Additionally, $K$ is a \emph{Dugundji compactum} if for each embedding $K \hookrightarrow L$ there exists a regular extension operator $C(K) \to C(L)$. We shall use two classical facts:
products of Miljutin compacta are Miljutin, and every Dugundji compactum is a
Miljutin compactum \cite{Haydon1974}.

\begin{theorem}\label{thm:nonmetrizable}
Let $K$ and $L$ be Miljutin compacta of the same infinite weight $\kappa$.
Suppose that each of $K$ and $L$ contains a closed copy of $2^\kappa$. Then
there is a positive isomorphism $T:C(K)\longrightarrow C(L)$
with regular inverse and such that
$ \rnorm{T} \cdot \rnorm{T^{-1}}\leq 9+6\sqrt3$.
\end{theorem}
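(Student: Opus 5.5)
The plan is to reduce Theorem~\ref{thm:nonmetrizable} to the positive Cantor--Bernstein principle, Theorem~\ref{thm:CB}. We put $X=C(K)$ and $Y=C(L)$. It then suffices to show that $c(Y)$ is a positive contractive retract of $X$ and, symmetrically, that $c(X)$ is a positive contractive retract of $Y$. Lemma~\ref{lem:cC} identifies $c(Y)$ lattice isometrically with $C(\Ninf\times L)$. Lattice isometries compose harmlessly with positive contractions, so the task becomes: $C(\Ninf\times L)$ is a positive contractive retract of $C(K)$. We establish this through the intermediate space $C(2^\kappa)$.

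First, we show that $C(\Ninf\times L)$ is a positive contractive retract of $C(2^\kappa)$.
\begin{itemize}
\item $\Ninf$ is a metrizable compactum, hence Miljutin for weight $\aleph_0$ by Proposition~\ref{prop:positive-retract}'s ingredients. Concretely, $\Ninf$ is a continuous image of $2^{\N}$ admitting a regular averaging operator, via Miljutin's lemma.
\item Since products of Miljutin compacta are Miljutin, $\Ninf\times L$ admits a continuous surjection from $2^{\N}\times 2^\kappa\cong 2^\kappa$ with a regular averaging operator. This product is built from the two given surjections, with the averaging operator on products.
\item The weight of $\Ninf\times L$ is still $\kappa$, as $\kappa$ is infinite, so the weights match.
\end{itemize}
If $\pi:2^\kappa\to \Ninf\times L$ is this surjection with averaging operator $A$, then $J=\pi^*$ and $Q=A$ are positive contractions with $QJ=\Id$. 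This gives the retraction.

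Second, we show that $C(2^\kappa)$ is a positive contractive retract of $C(K)$. Let $F\subseteq K$ be the closed copy of $2^\kappa$. The space $2^\kappa$ is a Dugundji compactum; this is classical, since $2^\kappa$ is a product of metrizable compacta, and I would cite Haydon or Pe{\l}czy{\'n}ski for it. Hence there is a regular extension operator $D:C(F)\to C(K)$. We take $J=D$ and $Q=$ restriction to $F$. Both are positive and unital, hence contractions, and $QJ=\Id_{C(F)}$. Composing the two retractions gives the first required retraction, and the same argument with $K$ and $L$ interchanged gives the second. Theorem~\ref{thm:CB} then yields $T$ with $\rnorm{T}\,\rnorm{T^{-1}}\leq 9+6\sqrt3$. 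Note that this product bound is exactly what Theorem~\ref{thm:CB} states, and it is even balanced by rescaling.

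The main obstacle is the first step: ensuring that $\Ninf\times L$ is Miljutin of weight exactly $\kappa$, with a surjection from $2^\kappa$ itself rather than from some $2^\lambda$. I expect the cited product-stability result to handle this. If it is only stated for surjections from possibly different Cantor cubes, I would use $2^{\N}\times 2^\kappa\cong 2^{\kappa}$ explicitly and build the averaging operator as the tensor product of the two averaging operators, acting on $C(2^{\N}\times 2^\kappa)$ via iterated integration against the representing kernels. A minor point is that the Dugundji property of $2^\kappa$ relies on results external to the paper. Alternatively, the retraction onto $C(F)$ could be obtained from Haydon's extension theorem for closed subsets of Dugundji spaces.
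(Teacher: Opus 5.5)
Your proposal is correct and follows the paper's proof essentially verbatim. You obtain a Miljutin map $\pi:2^\kappa\to\Ninf\times L$ with averaging operator $A$ by taking a product with a metrizable Miljutin map onto $\Ninf$ and using $2^{\N}\times 2^\kappa\cong 2^\kappa$; you then extend from the closed copy $F\cong 2^\kappa$ by a regular extension operator $D$, which yields the retraction $J=D\pi^*$, $Q=AR$ (you only present it as a composition through $C(2^\kappa)$), before applying Lemma~\ref{lem:cC} and Theorem~\ref{thm:CB}, and the facts you flag as external (the product averaging operator and the Dugundji property of $2^\kappa$) are the same classical inputs the paper cites.
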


\begin{proof}
We verify the hypotheses of Theorem~\ref{thm:CB}. Since $L$ is Miljutin,
so is $\Ninf\times L$. Indeed, one takes the product of a Miljutin map onto
$L$ with a metrizable Miljutin map onto $\Ninf$ and uses
$2^\kappa\times2^{\N}\cong2^\kappa$; the corresponding product of the
probability kernels gives a regular averaging operator. Hence there are a
continuous surjection $\pi:2^\kappa\longrightarrow\Ninf\times L$
and a regular averaging operator $A$ for $\pi$.

Choose a closed subset $F$ of $K$ homeomorphic to $2^\kappa$ and identify $F$ with $2^\kappa$ via a fixed homeomorphism. Under this identification, regard $\pi$ as a map on $F$ and $A$ as an operator on $C(F)$. The Cantor cube is a Dugundji compactum, so there is a regular extension operator
$D:C(F)\to C(K)$ for the inclusion $F\subseteq K$. If
$R:C(K)\to C(F)$ is restriction, then
\[
 J=D\pi^*:C(\Ninf\times L)\longrightarrow C(K),
 \qquad
 Q=AR:C(K)\longrightarrow C(\Ninf\times L)
\]
are positive contractions and $QJ=\Id$. By Lemma~\ref{lem:cC}, this says that
$c(C(L))$ is a positive contractive retract of $C(K)$. Interchanging $K$ and
$L$ gives the other retraction. Theorem~\ref{thm:CB} completes the proof.
\end{proof}

The property of containing a closed copy of $2^\kappa$ for some $\kappa$ has several useful classical sufficient
conditions. Efimov and, independently, Gerlits proved that a dyadic compactum
$K$ of weight $\kappa$ contains a copy of $2^\kappa$ if and only if it is not
the union of countably many closed subspaces of weight less than $\kappa$
\cite{Efimov1977,Gerlits1976}. In particular, a dyadic compactum of weight
$\kappa$ contains $2^\kappa$ whenever $\operatorname{cf}(\kappa)>\omega$.
Since Miljutin compacta are dyadic, we obtain the following.

\begin{corollary}\label{cor:dugundji}
Let $K$ and $L$ be Dugundji compacta of the same infinite weight $\kappa$.
If each contains a copy of $2^\kappa$ --in particular, if
$\operatorname{cf}(\kappa)>\omega$-- then the conclusion and the estimates of
Theorem~\ref{thm:nonmetrizable} hold.
\end{corollary}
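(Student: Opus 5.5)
The plan is to reduce Corollary~\ref{cor:dugundji} directly to Theorem~\ref{thm:nonmetrizable}. This requires two things: that Dugundji compacta of weight $\kappa$ are Miljutin compacta of weight $\kappa$, and that each of $K$, $L$ contains a closed copy of $2^\kappa$.

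The first point is exactly the classical fact quoted before Theorem~\ref{thm:nonmetrizable}: every Dugundji compactum is a Miljutin compactum \cite{Haydon1974}. Since the weight of $K$ and $L$ is the prescribed $\kappa$, the Miljutin map can be taken from $2^\kappa$ onto $K$ (respectively $L$).

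For the second point, if a closed copy of $2^\kappa$ is assumed, we are done. Otherwise, assume $\operatorname{cf}(\kappa)>\omega$ and proceed as follows.
\begin{enumerate}
\item[(a)] Observe that Miljutin compacta are dyadic, being continuous images of $2^\kappa$; hence $K$ and $L$ are dyadic of weight $\kappa$.
\item[(b)] Invoke the Efimov--Gerlits theorem \cite{Efimov1977,Gerlits1976}. It suffices to check that a compactum of weight $\kappa$ with $\operatorname{cf}(\kappa)>\omega$ is not a countable union of closed subspaces $F_n$ of weight $<\kappa$.
\item[(c)] For that check, note that $\sup_n w(F_n)<\kappa$ by uncountable cofinality. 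A finite union of closed sets of weight $\le\lambda$ has weight $\le\lambda$ (for infinite $\lambda$). A countable union of closed subspaces, each of weight $\le\lambda$, covering a compactum $K$ gives $w(K)\le\lambda\cdot\omega$. I would justify this via the embedding of $K$ into $\prod_n$ of the spaces obtained by taking quotients, or more simply by using $w(K)=|C(K)|$-type cardinal bounds, e.g.\ network weight equals weight for compacta and networks add up over countable unions. This yields $w(K)<\kappa$, a contradiction.
\end{enumerate}
Step (c) is the only nonformal step, and the network-weight argument is the one I would use.

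Once both hypotheses are verified for $K$ and $L$, Theorem~\ref{thm:nonmetrizable} gives the positive isomorphism with regular inverse and $\rnorm{T}\cdot\rnorm{T^{-1}}\le 9+6\sqrt3$.
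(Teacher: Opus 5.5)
Your proposal is correct and follows the paper's proof. Haydon's theorem makes $K$ and $L$ Miljutin compacta of weight $\kappa$, and the Efimov--Gerlits criterion supplies the closed copies of $2^\kappa$ when $\operatorname{cf}(\kappa)>\omega$; your network-weight check ($w=nw$ for compacta, networks of countably many pieces combine) correctly fills in the step the paper leaves implicit, and Theorem~\ref{thm:nonmetrizable} then applies. Drop the aside about ``$w(K)=|C(K)|$'', which is false (the correct relation is $w(K)=\operatorname{dens}C(K)$); your argument does not rely on it.
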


\begin{proof}
Every Dugundji compactum is Miljutin by Haydon's theorem
\cite{Haydon1974}; apply Theorem~\ref{thm:nonmetrizable} and the preceding
closed-cube criterion.
\end{proof}

For compact groups no cofinality restriction is needed. The
Ivanovskii--Kuzminov theorem says that compact groups are dyadic
\cite{Kuzminov1959}, and compact groups are in fact Dugundji compacta
\cite{Uspenskii1990}. If $G$ is an
infinite compact group of weight $\kappa$, then every nonempty open subset of
$G$ has weight $\kappa$: finitely many translates of it cover $G$. Were $G$
a countable union of closed subspaces of smaller weight, the Baire category
theorem would force one of them to have nonempty interior, a contradiction.
The Efimov--Gerlits criterion therefore gives a closed copy of $2^\kappa$ in
$G$. We have proved the following positive group version of Pe{\l}czy{\'n}ski's
classification.

\begin{corollary}\label{cor:compact-groups}
Let $G$ and $H$ be infinite compact groups of the same weight. Then there is
a positive isomorphism
 $T:C(G)\longrightarrow C(H)$
with regular inverse and such that
\[\rnorm{T} \cdot \ \rnorm{T^{-1}}\leq 9+6\sqrt3.\]
In particular, $C(G)$ and $C(H)$ are regularly isomorphic.
\end{corollary}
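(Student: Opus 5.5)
The plan is to reduce Corollary~\ref{cor:compact-groups} directly to Theorem~\ref{thm:nonmetrizable}, using the discussion just before the statement. Let $\kappa$ be the common weight of $G$ and $H$; it is infinite since the groups are infinite. We must check two hypotheses for each group. First, $G$ and $H$ must be Miljutin compacta of weight $\kappa$. Second, each must contain a closed copy of $2^\kappa$. Once both hold, Theorem~\ref{thm:nonmetrizable} yields a positive isomorphism $T:C(G)\to C(H)$ with regular inverse and $\rnorm{T}\cdot\rnorm{T^{-1}}\leq 9+6\sqrt3$. Since $T\geq 0$ is in particular regular, $T$ and $T^{-1}$ are both regular operators, which gives the final sentence about $C(G)$ and $C(H)$ being regularly isomorphic.

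For the Miljutin property, I invoke Uspenskii's theorem that compact groups are Dugundji compacta \cite{Uspenskii1990}. Combined with Haydon's theorem \cite{Haydon1974}, every Dugundji compactum is Miljutin, so $G$ and $H$ are Miljutin compacta of weight $\kappa$.

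For the closed copy of $2^\kappa$, I use the Ivanovskii--Kuzminov theorem \cite{Kuzminov1959}: $G$ is dyadic. By the Efimov--Gerlits criterion \cite{Efimov1977,Gerlits1976}, it then suffices to show that $G$ is not a countable union of closed subspaces of weight $<\kappa$. This is the only genuinely new step, and I will argue it as in the preceding paragraph.

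\begin{itemize}
\item Every nonempty open set $U\subseteq G$ has weight $\kappa$. By compactness, finitely many translates $g_1U,\dots,g_nU$ cover $G$. Each translate is homeomorphic to $U$, so if $U$ had a base of size $<\kappa$, then the union of these finitely many bases would be a base of $G$ of size $<\kappa$. This works for infinite $\kappa$; finite weight does not occur because $G$ is infinite and Hausdorff.
\item Suppose $G=\bigcup_n F_n$ with each $F_n$ closed of weight $<\kappa$. By the Baire category theorem for compact Hausdorff spaces, some $F_n$ has nonempty interior $U$. Since weight is monotone on subspaces, $w(U)\leq w(F_n)<\kappa$, contradicting the first item.
\end{itemize}

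The same argument applies to $H$. The main point of care is the monotonicity of weight used in the second item; it holds because a base of $F_n$ restricts to a base of the subspace $U$. With both hypotheses verified for $G$ and $H$, Theorem~\ref{thm:nonmetrizable} finishes the proof.
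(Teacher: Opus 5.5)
Your proposal is correct and follows the paper's argument: Uspenskii's theorem combined with Haydon's theorem gives the Miljutin property, and the Ivanovskii--Kuzminov theorem, the Efimov--Gerlits criterion and the translate/Baire-category argument give a closed copy of $2^\kappa$, after which Theorem~\ref{thm:nonmetrizable} applies. Your version only adds details the paper leaves implicit, namely why finitely many bases of the translates combine into a base of $G$ and why weight is monotone on subspaces.
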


We finally record a consequence for products, combining the
preceding results with the construction of Plebanek, Rondo\v{s}, and Sobota
\cite{PlebanekRondosSobota2026}.

\begin{corollary}\label{cor:products}
Let $K_1$ and $K_2$ be compact spaces.
\begin{enumerate}
\item[\emph{(i)}] If $K_1$ and $K_2$ are non-scattered, then, for every nonempty compact
metrizable space $L$, the space $C(L)$ is a positive contractive retract of
$C(K_1\times K_2)$.

\item[\emph{(ii)}] Suppose that both $K_1$ and $K_2$ map continuously onto an infinite
compact group $G$ of weight $\kappa$. Then, for every Miljutin compactum $L$
of weight $\kappa$, the space $C(L)$ is a positive contractive retract of
$C(K_1\times K_2)$.
\end{enumerate}
\end{corollary}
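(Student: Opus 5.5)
The plan is to reduce both parts to a single "transfer" step supplied by the construction of Plebanek, Rondo\v{s}, and Sobota \cite{PlebanekRondosSobota2026}, and then to compose positive contractive retractions. Composition preserves the property: if $J_1,Q_1$ witness that $X$ is a positive contractive retract of $Y$, and $J_2,Q_2$ witness that $Y$ is one of $Z$, then $J_2J_1$ and $Q_1Q_2$ are positive contractions with $Q_1Q_2J_2J_1=\Id_X$. So in each case I only need a chain of such retractions from $C(L)$ up to $C(K_1\times K_2)$.

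For (i), I recall the classical fact that a compact space is non-scattered if and only if it maps continuously onto $[0,1]$, or equivalently onto $2^{\N}$. The key input I would take from \cite{PlebanekRondosSobota2026} is the following. From such maps on the two factors, they build a continuous surjection from $K_1\times K_2$ (or from a closed subset of it) onto an uncountable metrizable compactum $M$. Crucially, this surjection comes with a unital positive operator back to $C(M)$ that is a left inverse of composition. The product structure is what makes such an averaging-type operator available, even though a single surjection $K_i\to[0,1]$ need not admit one. I would reformulate their output as: $C(M)$ is a positive contractive retract of $C(K_1\times K_2)$. Here $J$ is composition with the surjection, possibly followed by a regular extension operator from the closed subset, and $Q$ is their averaging operator. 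Both are unital, hence positive contractions, as observed in the preliminaries. Proposition~\ref{prop:positive-retract}, applied with source space $M$ and target space $L$, then shows that $C(L)$ is a positive contractive retract of $C(M)$, and composing gives (i).

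For (ii), I would use the same scheme with the compact group $G$ in place of $M$. Let $p_i:K_i\to G$ be the given surjections. The natural embedding is $J f(x,y)=f\bigl(p_1(x)p_2(y)^{-1}\bigr)$. For the retraction I would try to average against Haar measure. I expect this to be the main obstacle, since one needs to integrate over fibres in $K_1\times K_2$ continuously in $g\in G$ without any group action on the $K_i$. The part of \cite{PlebanekRondosSobota2026} I would invoke is precisely the construction of a unital positive $Q:C(K_1\times K_2)\to C(G)$ with $QJ=\Id$. If I had to build it myself, I would first try lifting Haar measure to probability measures $\mu_i$ on $K_i$ with $(p_i)_*\mu_i$ equal to Haar measure, and then averaging $F$ over $\{(x,y): p_1(x)p_2(y)^{-1}=g\}$ using the invariance of Haar measure.

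Given $C(G)$ as a positive contractive retract of $C(K_1\times K_2)$, the rest follows from Section~\ref{sec:non-metrizable}. Since $G$ is an infinite compact group of weight $\kappa$, it contains a closed copy $F$ of $2^\kappa$, by the Efimov--Gerlits argument preceding Corollary~\ref{cor:compact-groups}. Because $2^\kappa$ is Dugundji, there is a regular extension operator $D:C(F)\to C(G)$, and together with restriction this makes $C(2^\kappa)$ a positive contractive retract of $C(G)$. Since $L$ is Miljutin of weight $\kappa$, there are a surjection $\pi:2^\kappa\to L$ and a regular averaging operator $A$ for it, so $\pi^*$ and $A$ make $C(L)$ a positive contractive retract of $C(2^\kappa)$, exactly as in the proof of Theorem~\ref{thm:nonmetrizable}. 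Composing the three retractions gives (ii).
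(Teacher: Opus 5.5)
Your proposal follows the paper's proof. Both obtain $C(G)$ as a positive contractive retract of $C(K_1\times K_2)$ from the Plebanek--Rondo\v{s}--Sobota construction ($h\mapsto h\circ\rho$ together with fibrewise averaging by probability measures), and then compose with Proposition~\ref{prop:positive-retract} in (i) and with the Dugundji-extension/Miljutin-averaging retraction through a closed copy of $2^\kappa$ in (ii). Two small points: in (i) the paper simply takes $M$ to be the circle group $\mathbb T$ (non-scattered compacta map onto $[0,1]$, hence onto $\mathbb T$), which makes your hedge about closed subsets and extension operators unnecessary. Also, your aside ``equivalently onto $2^{\N}$'' is false, since $[0,1]$ is non-scattered but, being connected, does not map onto $2^{\N}$; you never use it, so it is harmless.
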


\begin{proof}
Theorem~1.1 of \cite{PlebanekRondosSobota2026} is stated in terms of an
isometric complemented copy of $C(G)$. Positivity was not the primary object
of interest there and is therefore not recorded in its statement. It does,
however, follow directly from the proof: the embedding is the composition
operator
\[
 J_G:C(G)\longrightarrow C(K_1\times K_2),
 \qquad J_Gh=h\circ\rho,
\]
and its left inverse has the form
\[
 Q_Gf(y)=\int_{K_1\times K_2}f\,d\phi(y),
\]
where $\phi(y)$ is a probability measure concentrated on $\rho^{-1}(y)$.
Thus $J_G$ and $Q_G$ are positive contractions and
$Q_GJ_G=\Id_{C(G)}$.

If $K_1$ and $K_2$ are non-scattered, they both map continuously onto the
circle group $\mathbb T$. Proposition~\ref{prop:positive-retract} shows that
$C(L)$ is a positive contractive retract of $C(\mathbb T)$. Composing the two
retractions proves~\emph{(i)}.

For~\emph{(ii)}, the argument preceding Corollary~\ref{cor:compact-groups} gives a
closed copy $F$ of $2^\kappa$ in $G$. Let
$\pi:2^\kappa\to L$ be a Miljutin map with regular averaging operator $A$.
Identifying $F$ with $2^\kappa$, take a regular extension operator
$D:C(F)\to C(G)$ and let $R:C(G)\to C(F)$ be restriction. Then
\[
 D\pi^*:C(L)\longrightarrow C(G),
 \qquad
 AR:C(G)\longrightarrow C(L)
\]
are positive contractions whose composition on $C(L)$ is the identity.
Composing this retraction with the one given by $J_G,Q_G$ is enough to conclude. 
\end{proof}

% Uncomment and complete before public circulation, if appropriate.
% \subsection*{Acknowledgements}
% The authors thank ...

\section*{AI disclosure}
The authors acknowledge the use of OpenAI's ChatGPT 5.5 in the development of this paper. In particular, ChatGPT provided an initial draft of the proof of the main theorem. The authors subsequently analyzed the draft in detail, considerably simplifying the original arguments, and construct the paper in its present form. Therefore, all statements, proofs and references appearing in this manuscript have been carefully revised by the authors, who accept the sole responsibility for any remaining inaccuracies or mistakes.

\providecommand{\bysame}{\leavevmode\hbox to3em{\hrulefill}\thinspace}
\providecommand{\MR}{\relax\ifhmode\unskip\space\fi MR }
% \MRhref is called by the amsart/book/proc definition of \MR.
\providecommand{\MRhref}[2]{%
  \href{http://www.ams.org/mathscinet-getitem?mr=#1}{#2}
}
\providecommand{\href}[2]{#2}

\end{document}